\pgfplotsset{compat=1.15}
\newcommand*\Laplace{\mathop{}\!\mathbin\bigtriangleup}
\newcommand{\norm}[1]{\left\lVert#1\right\rVert}
\newtheorem{theorem}{Theorem}
\newtheorem{proposition}[theorem]{Proposition}
\newtheorem{corollary}[theorem]{Corollary}
\theoremstyle{remark} 
\newtheorem{remark}[]{Remark}
\newtheorem{example}[]{Example}
\numberwithin{equation}{section}
\newcommand{\R}{\mathbb{R}}
\newcommand{\C}{\mathbb{C}}
\newcommand{\RP}{\mathbb{R}\mathrm{P}}
\newcommand{\cP}{\mathcal{P}}
\newcommand{\cH}{\mathcal{H}}
\newcommand{\la}{\lambda}
\renewcommand{\epsilon}{\varepsilon}
\newcommand{\vol}{\mathrm{vol}}
\newcommand{\be}{\begin{equation}}
\newcommand{\ee}{\end{equation}}
\title{Real lines on random cubic surfaces}
\author{Rida Ait El Manssour}
\address{MPI-MiS Leipzig, Inselstra{\ss}e 22, 04103 Leipzig, Germany}
\email{rida.manssour@mis.mpg.de}
\author{Mara Belotti}
\address{Technische Universit\"at Berlin, Chair of Discrete Mathematics/Geometry, Stra{\ss}e des 17. Juni 136, 10623 Berlin, Germany}
\email{belotti@math.tu-berlin.de}
\author{Chiara Meroni}
\address{MPI-MiS Leipzig, Inselstra{\ss}e 22, 04103 Leipzig, Germany}
\email{chiara.meroni@mis.mpg.de}
\begin{document}

\maketitle
\begin{abstract}
We give an explicit formula for the expectation of the number of real lines on a random invariant cubic surface, i.e., a surface $Z\subset \RP^3$ defined by a random gaussian polynomial whose probability distribution is invariant under the action of the orthogonal group $O(4)$ by change of variables. Such invariant distributions are completely described by one parameter $\lambda\in [0,1]$ and as a function of this parameter the expected number of real lines equals:
\be E_\la=\frac{9(8\la^2+(1-\la)^2)}{2\la^2+(1-\la)^2}\left(\frac{2\la^2}{8\la^2+(1-\la)^2}-\frac{1}{3}+\frac{2}{3}\sqrt{\frac{8\la^2+(1-\la)^2}{20\la^2+(1-\la)^2}}\right).\ee
This result generalizes previous results by S. Basu, A. Lerario, E. Lundberg and C. Peterson \cite{BLLP} for the case of a Kostlan polynomial, which corresponds to $\la=\frac{1}{3}$ and for which $E_{\frac{1}{3}}=6\sqrt{2}-3.$ Moreover, we show that the expectation of the number of real lines is maximized by random purely harmonic cubic polynomials, which corresponds to the case $\la=1$ and for which $E_1=24\sqrt{\frac{2}{5}}-3$.
\end{abstract}

\section{Introduction}

A classical result from complex algebraic geometry tells that on a \emph{generic} cubic surface in complex projective space there are exactly $27$ lines. This is still true for a generic \emph{real} cubic surface, i.e., on the zero set in complex projective space of a \emph{real} cubic polynomial, however these lines might not be real. In fact the number of real lines on the real zero locus $Z(P)\subset \RP^3$, for a generic $P\in \mathbb{R}[x_0, \ldots, x_3]_{(3)}$ in the space of real homogeneous polynomials of degree $3$ can be either $27, 15, 7$ or $3$, depending on the coefficients of the chosen polynomial \cite{Segre}.

This is a typical phenomenon in real algebraic geometry, where in general there is no ``generic'' answer to such counting problems. There is however a recent interest into looking at these questions from the probabilistic point of view, replacing the word generic with ``random'', which in the case of the current paper means asking for the expectation of the number of real lines on a random real cubic surface. This approach has its origin in classical works of Kac \cite{kac43}, Edelman and Kostlan \cite{EdelmanKostlan95}, Shub and Smale \cite{Bez2, Bez4}, and it has recently seen new progress \cite{GaWe1, GaWe3, GaWe2, LerarioFLL, NazarovSodin1, NazarovSodin2, Sarnak, SarnakWigman,Lerariolemniscate, Letwo, Lerarioshsp, LeLu:gap, Antonio, LerarioStecconi, Sarnak, PSC}, leading to the emergence of  the field of \emph{Random real algebraic geometry}. 

Of course, when talking about expected quantities, one should specify what is meant by ``random''. In this paper we will endow the space $\R[x_0, \ldots x_3]_{(3)}$ with a centered, nondegenerate gaussian distribution, which we require to be invariant under the action of the orthogonal group $O(4)$ by change of variables - so that there are no preferred points or directions in the projective space $\RP^3$. Notice that Darmois--Skitovich Theorem together with a straightforward generalization of Theorem $4.5$ of \cite{kostlan:93} guarantee that the gaussianity is a consequence of the independence of the coefficients of the monomials and the orthogonal invariance.
Such a probability distribution will be called an \emph{invariant distribution} and a polynomial sampled from it will be called an \emph{invariant polynomial}. Invariant distributions on $\R[x_0, \ldots x_3]_{(3)}$ can be explicitly described: they correspond to scalar products on $\R[x_0, \ldots x_3]_{(3)}$ which are invariant under the action of the orthogonal group $O(4)$ by change of variables, and they are parametrized by a point in the positive quadrant $(\la_1, \la_2)\in (0, \infty)\times (0, \infty)$, see \cite{kostlan:93}. This comes from the fact that there is a decomposition
\be \label{eq:deco}
\R[x_0, \ldots x_3]_{(3)}=\cH_3\oplus \|x\|^2\cdot\cH_1,
\ee
where $\cH_3$ and $\cH_1$ denotes respectively the space of harmonic cubic polynomials and harmonic linear polynomials (i.e., just linear polynomials). The remarkable fact here is that the decomposition \eqref{eq:deco} is orthogonal with respect to any invariant scalar product; moreover the action of the orthogonal group by change of variables preserves the two spaces of harmonics and in addition the induced representation on these spaces is irreducible. In particular, in each space of harmonics, there is a unique (up to multiples) scalar product which is $O(4)$--invariant; this explains the two positive parameters needed to describe an invariant distribution.

In practice, in order to construct a random invariant polynomial, we proceed as follows. First observe that the quantity we are interested in (the number of lines on the zero set, and in fact the zero set itself) does not depend on the multiple of the defining polynomial that we take and we can normalize our parameters to satisfy $\la_1+\la_2=1$. In particular we can work with a single parameter $\la\in (0,1)$ such that $(\la_1, \la_2)=(\la, 1-\la)$. Consider the $L^2$--scalar product, which is defined for $f, g\in \R[x_0, \ldots x_3]_{(3)}$ by
\be 
( f, g )_{L^2}=\frac{1}{2\pi^2} \int_{\R^4}f(x)\,g(x) e^{-\frac{\norm{x}^2}{2}} \,dx.
\ee
Then we fix bases $\{H_{3, j}\}_{j\in J_3=\{1,\dots ,16\}}$ for $\cH_3$ and $\{H_{1, j}\}_{j\in J_1=\{1,\dots ,4\}}$ for $\cH_1$ which are orthonormal with respect to the $L^2$--scalar product.
With these choices we define a random polynomial $P_\la$ as a linear combination of random harmonics, weighted by the parameters:
\be\label{eq:invariant} P_{\la}(x)=\la\left(\sum_{j\in J_3}\xi_{3, j}\cdot H_{3, j}(x)\right)+(1-\la)\left(\sum_{j\in J_1}\xi_{1, j}\cdot \|x\|^2H_{1, j}(x)\right),\ee
where $\{\xi_{3, j}\}_{j\in J_3}$ and $\{\xi_{1, j}\}_{j\in J_1}$ are two independent families of independent standard gaussians. We include in our study also the choices $\la=0$ and $\la=1$, which correspond to purely harmonic polynomials (but not to scalar products). The case $\lambda = 0$ is special also for another reason: the associated hypersurface is a degenerate cubic, namely a hyperplane.

\begin{example}[The Kostlan distribution]A Kostlan random polynomial is defined by
\be\label{eq:kostlan} P(x)=\sum_{|\alpha|=3}\xi_\alpha \cdot \left(\frac{3!}{\alpha_0!\cdots\alpha_3!}\right)^{1/2}x_0^{\alpha_0}\cdots x_3^{\alpha_3}\ee
where $\{\xi_{\alpha}\}_{|\alpha|=3}$ is a family of independent standard gaussians. The resulting probability distribution  on $\R[x_0, \ldots x_3]_{(3)}$ is invariant and corresponds to the choice of $\lambda=\frac{1}{3}$ in \eqref{eq:invariant} (see Corollary \ref{cor:kostlan}). The authors of \cite{BLLP} have proved that the expectation of the number of real lines on the zero set of a random Kostlan cubic equals:
\be \label{eq:lineskostlan} E_{\frac{1}{3}}=6\sqrt{2}-3.\ee
\end{example}
Generalizing the work of \cite{BLLP}, in this paper we give an explicit formula for the expectation of the number of real lines on a random invariant cubic, as a function of the parameter $\lambda\in [0,1]$.
\begin{theorem}\label{thm:main}The expected number of real lines on the zero set of the random cubic polynomial $P_\la$ equals:
\be \label{eq:Elambda}
E_\la=\frac{9(8\la^2+(1-\la)^2)}{2\la^2+(1-\la)^2}\left(\frac{2\la^2}{8\la^2+(1-\la)^2}-\frac{1}{3}+\frac{2}{3}\sqrt{\frac{8\la^2+(1-\la)^2}{20\la^2+(1-\la)^2}}\right).
\ee
\end{theorem}
An interesting corollary of the previous Theorem is the fact that one can analytically prove that the expectation is maximized at $\lambda=1$, i.e., for random  purely harmonic cubics.
\begin{corollary}\label{cor_maximum}
The function $E_\lambda$ is monotone increasing and attains its maximum at $\lambda=1$:
\be 
E_1=24\sqrt{\frac{2}{5}}-3.
\ee
\end{corollary}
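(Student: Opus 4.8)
The plan is to first reduce the forbidding closed form for $E_\lambda$ to a transparent one, and then to prove monotonicity by a single‑variable computation. Writing $A=8\lambda^2+(1-\lambda)^2$, $B=2\lambda^2+(1-\lambda)^2$ and $C=20\lambda^2+(1-\lambda)^2$, I would expand the product defining $E_\lambda$ to obtain $E_\lambda=\tfrac{18\lambda^2-3A}{B}+\tfrac{6A^{3/2}}{B\sqrt{C}}$, and then notice the small miracle $18\lambda^2-3A=-3B$, so that
\[
E_\lambda=-3+\frac{6\,A^{3/2}}{B\sqrt{C}} .
\]
(As a sanity check this yields $E_{1/3}=6\sqrt2-3$ and $E_1=24\sqrt{2/5}-3$.) In particular $E_\lambda$ is monotone increasing on $[0,1]$ if and only if $\phi(\lambda):=A^3/(B^2C)$ is, since $A,B,C>0$ throughout $[0,1]$.

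Next I would exploit homogeneity. Since $A,B,C$ are quadratic forms in $(\lambda,1-\lambda)$, setting $t:=\lambda^2/(1-\lambda)^2$ gives $A=(1-\lambda)^2(8t+1)$, $B=(1-\lambda)^2(2t+1)$, $C=(1-\lambda)^2(20t+1)$, hence $\phi(\lambda)=h(t)$ with
\[
h(t)=\frac{(8t+1)^3}{(2t+1)^2(20t+1)} .
\]
On $[0,1)$ the map $\lambda\mapsto\lambda/(1-\lambda)$ is a strictly increasing bijection onto $[0,\infty)$, so $\lambda\mapsto t$ is as well; thus it suffices to show $h$ is strictly increasing on $[0,\infty)$. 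A direct differentiation, writing $N=(8t+1)^3$ and $D=(2t+1)^2(20t+1)$, clearing the positive denominator $D^2$ and factoring $N'D-ND'$ by $24(8t+1)^2(2t+1)$, reduces the sign of $h'$ to that of $(2t+1)(20t+1)-(8t+1)(5t+1)=9t\ge 0$. Concretely,
\[
h'(t)=\frac{216\,t\,(8t+1)^2}{(2t+1)^3(20t+1)^2}\ge 0 ,
\]
with strict inequality for $t>0$, so $h$ is strictly increasing with $\lim_{t\to\infty}h(t)=8^3/(2^2\cdot 20)=32/5$.

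Finally I would assemble the pieces: $E_\lambda=-3+6\sqrt{h(t(\lambda))}$ is strictly increasing on $[0,1)$, and evaluating the simplified formula directly at $\lambda=1$ (where $A=8$, $B=2$, $C=20$) gives $E_1=-3+6\cdot 8^{3/2}/(2\sqrt{20})=24\sqrt{2/5}-3=-3+6\sqrt{32/5}$, which by the monotonicity of $h$ and the value of its limit strictly exceeds $E_\lambda$ for every $\lambda\in[0,1)$. Hence $E$ is monotone increasing on $[0,1]$ and attains its maximum at $\lambda=1$. I do not expect any genuine obstacle: the whole argument hinges on spotting the cancellation $18\lambda^2-3A=-3B$, after which the only analytic content is the one‑line computation of $h'$. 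The single point needing mild care is the endpoint $\lambda=1$, where the substitution $t=\lambda^2/(1-\lambda)^2$ degenerates; this is handled by the direct evaluation at $\lambda=1$ together with the limit $t\to\infty$ of $h$, so no separate argument is required.
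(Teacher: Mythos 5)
Your proof is correct: the cancellation $18\lambda^2-3A=-3B$ does give $E_\lambda=-3+6A^{3/2}/(B\sqrt{C})$, your derivative computation $h'(t)=216\,t(8t+1)^2/\bigl((2t+1)^3(20t+1)^2\bigr)$ checks out, and the endpoint $\lambda=1$ is handled cleanly by direct evaluation. The paper proves the same corollary by staying in the $(M,N)$ parametrization used in its proof of the main theorem: it simplifies $\check{E}_{M,N}=-3+\frac{96}{(8-M^2)\sqrt{4+M^2}}$ and shows $g(M)=(8-M^2)\sqrt{4+M^2}$ is decreasing via $g'(M)=-3M^3/\sqrt{4+M^2}$, invoking (without verification) that $M$ is an increasing function of $\lambda$. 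The two routes are really the same argument in different coordinates — indeed $M^2=48\lambda^2/A$ gives $8-M^2=8B/A$ and $4+M^2=4C/A$, so $\frac{96}{(8-M^2)\sqrt{4+M^2}}=\frac{6A^{3/2}}{B\sqrt{C}}$ — but yours is self-contained from the stated formula for $E_\lambda$, makes the monotonicity of the reparametrization $\lambda\mapsto t$ explicit, and in addition yields strict monotonicity and the limit value $h(t)\to 32/5$, whereas the paper's version is shorter because the simplified $\check{E}_{M,N}$ is already available from the preceding proof.
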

\begin{remark}The previous corollary is particularly interesting because it confirms the intuition that purely harmonic polynomials of maximum degree exhibit complicated topological configurations, see \cite{Kozhasov}.
\end{remark}

\begin{remark}
On the other hand the minimum of the function $E_{\lambda}$ is $3$, and this number has a precise meaning. In fact we prove in Proposition \ref{prop_perturbH1} that there exists a neighborhood of the set of purely harmonic polynomials of degree one (i.e. linear form times $\|x\|^2$), such that the smooth cubic surfaces in this neighborhood contain exactly three lines. The proof of this fact does not involve the expression of the function $E_{\lambda}$, therefore we could deduce that $E_{0} = 3$ without knowing \eqref{eq:Elambda}.
\end{remark}

\begin{remark}Another possible model of random cubics can be introduced following the work of Allcock, Carlson, and Toledo \cite{ACD}. They have studied the moduli space of real cubic surfaces from the point of view of hyperbolic geometry and computed the orbifold Euler characteristic (which is proportional to the hyperbolic volume) of each component of the moduli space. One can define an expectation taking the weighted average of the number of real lines, weighted by the volume of the corresponding component. In this way one gets an expected number of $\frac{239}{37}$ real lines, see \cite[Table 1.2]{ACD}. 
\end{remark}

\begin{remark}Yet another model of randomness can be obtained by looking at random \emph{determinantal} cubics. To be more specific, consider random $3\times 3$ matrices $A_0, A_1, A_2, A_3$ filled with independent standard gaussians, and define the random polynomial:
\be \label{eq:determinantal}F(x_0, x_1, x_2, x_3)=\det(x_0A_0+x_1A_1+x_2A_2+x_3A_3).\ee
Random determinantal cubics are $O(4)$--invariant. Smooth cubics admit a determinantal representation (i.e., they can be written as the zero set of some $F$ as in \eqref{eq:determinantal}), see \cite{Beauville, Buckley}. It is natural therefore to ask for the expectation of the number of real lines on a random determinantal cubic surface $Z(F)\subset \RP^3$, however this problem seems to be considerably more complicated than the gaussian one considered here (the coefficients of $F$ in \eqref{eq:determinantal} are cubic in gaussian variables and they are also highly dependent) and we leave this as an open question. 

\end{remark}

\begin{remark}As said in the beginning, our approach will be probabilistic and our answer will depend on the probability distribution we have chosen. It is important to mention that there exists also a certain signed count of lines on a (generic) real cubic surface that is independent of the surface itself. For this type of count, following classical work of Segre \cite{Segre} (later rediscovered and extended by Okonek and Teleman \cite{OkTel} and Kharlamov and Finashin \cite{finkh}), one can classify the lines lying on the cubic into elliptic and hyperbolic. This corresponds to giving a sign to each line. The number $e$ of elliptic lines plus the number $h$ of hyperbolic lines depend on the cubic, but their difference $h-e$ is always $3$. 
Following \cite{KassWickelgren}, one can further extend this type of signed count to a different field $\mathbb{K}$ (for instance the $p$--adic numbers $\mathbb{K}=\mathbb{Q}_p$). In this case a line is a closed point in the Grassmannian of lines in $\mathbb{P}_\mathbb{K}^3$. The sign, which is now called \emph{type}, takes value in the Grothendieck–Witt group GW($\mathbb{K}$) of non--degenerate bilinear forms and it depends on the field of definition of the line.
With these specifications we get a similar invariant count, see \cite[Theorem 2]{KassWickelgren}. An interesting question is: what happens over the $p$--adic numbers? In this direction, \cite[Theorem 2]{KassWickelgren} gives a way to perform a well defined enriched count but, in the spirit of the current paper, it makes sense to ask for the expected number of $\mathbb{Q}_p$--lines on a random $p$--adic cubic. This question has been studied by the first named author of this paper together with Lerario in \cite{manssour2020probabilistic} and the answer is $\tfrac{(p^3-1)(p^2+1)}{p^5-1}$.
\end{remark}

\begin{remark}
The study of cubic surfaces  has been recently enriched by the famous $27$ questions posed by Sturmfels, that are collected in \cite{S27Q}. By the same logic of this paper, it can be noticed that some of those questions can be restated according to a probabilistic point of view. For example looking at question $23$ and putting a probability distribution on $\RP^{19}$, instead of asking for a semialgebraic description of the set of smooth hyperbolic cubics in $\RP^{19}$, one could seek the probability of a smooth cubic to be hyperbolic.
\end{remark}

\subsection*{Acknowledgements}
We, the authors, wish to thank Antonio Lerario, our professor and the person without whom this article would not exist. Thank you for believing in us, in us who "attiriamo altre basi reali su armoniche random" ($=$we attract other real bases on random harmonics), and for your amazing ability in creating anagrams. Special thanks also to SISSA, the place where this article was born, and to the Max Planck Institute for Mathematics in the Sciences, Leipzig, for making our first little research experience great. We would also like to thank the ICERM for the beautiful workshop on Symmetry, Randomness, and Computations in Real Algebraic Geometry, that was the occasion for interesting discussions.

\section{Preliminaries}
\subsection{The decomposition into harmonic polynomials and invariant scalar products}\label{harmonics}

Let us consider the space of real $d$--homogeneous polynomials $W_{n,d}=\R[x_0, \ldots x_n]_{(d)}$. The orthogonal group $O(n+1)$ acts on it by change of variables, so that we can view $W_{n,d}$ as a representation of $O(n+1)$. We want to find the decomposition of  $W_{n,d}$ into its irreducible subrepresentations.
Denote the space of real homogeneous harmonic polynomials of degree $d$ in $n + 1$ variables by
\[
\cH^{n}_{d}:=\{H \in W_{n,d} : \Laplace H = 0\}.
\]
This space is invariant with respect to $O(n+1)$ and the following algebraic decomposition holds (see \cite{BLLP}):
\begin{equation}\label{eq:decirr}
W_{n,d}=\bigoplus_{d-j\in2\mathbb{N}}\norm{x}^{d-j}\cH_{j}^{n}.
\end{equation}
Moreover the spaces $\norm{x}^{d-j} \cH^{n}_{j}$ form irreducible representations of $O(n+1)$ and are orthogonal with respect to any $O(n+1)$--invariant scalar product.
Let us denote with $(\cdot ,\cdot)$ a generic real scalar product on $W_{n,d}$ which is invariant under the action of the orthogonal group $O(n+1)$; we will use the notation $(\cdot ,\cdot)_2$ for the $L^2$ scalar product which is by definition
\begin{equation}
(f,g)_2 = \frac{1}{2\pi^2}\int_{\R^{n+1}} f(x)g(x) e^{-\frac{\norm{x}^2}{2}} dx \qquad f,g\in W_{n,d}.
\end{equation}
As a consequence of Schur Lemma (see \cite[Lemma 18.1.1]{tao:2014}) the restriction of $(\cdot ,\cdot)$ to the space $\norm{x}^{d-j}\cH_{j}^{n}$ is a multiple of the $L^2$ scalar product. So given $f,g\in W_{n,d}$ we can write $f=\sum_{j}\norm{x}^{d-j}f_j$ and $g=\sum_{j}\norm{x}^{d-j}g_j$, with $f_j,g_j\in \cH_j^n$ where $j$ is such that $d-j\in 2\mathbb{N}$, and we have that
\[
(f,g)=\sum_{d-j\in2\mathbb{N}}\mu_j(\norm{x}^{d-j}f_j,\norm{x}^{d-j}g_j)_2
\]
for some $\mu_d$, $\mu_{d-2}$, $\ldots > 0$.

Given an invariant scalar product $(\cdot,\cdot)$ we can construct a gaussian probability distribution which is invariant under rotations. First we fix an orthonormal basis $\{\norm{x}^{d-j} H_{j,i}\}_{i \in J_j}$ for harmonics $\mathcal{H}^n_j$ with respect to $(\cdot,\cdot)_2$, where $J_j = \{1, \ldots , \hbox{dim}(\cH^n_j)\}$. Then $\{\lambda_j \norm{x}^{d-j} H_{j,i}\}$ is an orthonormal basis with respect to $(\cdot,\cdot)$, where $\lambda_j = \mu_j^{-\frac{1}{2}}$. We construct a random polynomial with such a basis whose coefficients are given by centered gaussian random variables $\xi_{j,i}\sim N(0,1)$:
\begin{equation}
P(x) = \sum_{d-j \in 2\mathbb{N}} \la_j \sum_{i\in J_j} \xi_{j,i} \norm{x}^{d-j} H_{j,i}(x).
\end{equation}
In our case we have that 
\[
W_{3,3} =\cH_3^3\oplus\norm{x}^2 \cH_1^3
\]
and therefore we only need two parameters to classify all the scalar products
\[
(\cdot,\cdot)=\mu_1(\cdot,\cdot)_2+\mu_2(\cdot,\cdot)_2.
\]
\begin{table}[t]
\caption{Orthogonal basis for Theorem \ref{thm:main}}
\begin{center}
\begin{tabular}{ |c| c| }
\hline
\rule[-2mm]{0mm}{0.65cm}
Basis for $\cH_3^3$ & $\norm{\cdot}_{L^2}$ \\[0.6ex]
\hline
\rule{0mm}{0.4cm}
$x_0x_1x_2$ & 1  \\[0.6ex]
$x_0x_1x_3$ & 1  \\[0.6ex] 
$x_0x_2x_3$ & 1\\[0.6ex]    
$x_1x_2x_3$ & 1\\[0.6ex]
$x_0^3-x_0(x_1^2+x_2^2+x_3^2)$ & $2\sqrt{3}$\\[0.6ex] 
$x_1^3-x_1(x_0^2+x_2^2+x_3^2)$ & $2\sqrt{3}$\\[0.6ex]
$x_2^3-x_2(x_0^2+x_1^2+x_3^2)$ & $2\sqrt{3}$\\[0.6ex] 
$x_3^3-x_3(x_0^2+x_1^2+x_2^2)$ & $2\sqrt{3}$\\[0.6ex] 
$x_0(x_1^2-x_2^2)$ & 2\\[0.6ex] 
$x_1(x_2^2-x_3^2)$ & 2\\[0.6ex] 
$x_2(x_1^2-x_0^2)$ & 2\\[0.6ex]  
$x_3(x_0^2-x_1^2)$ & 2\\[0.6ex]  
$x_0(x_1^2-x_3^2)-\frac{1}{2}x_0(x_1^2-x_2^2)$ & $\sqrt{3}$\\[0.6ex] 
$x_1(x_2^2-x_0^2)-\frac{1}{2}x_1(x_2^2-x_3^2)$ & $\sqrt{3}$\\[0.6ex] 
$x_2(x_1^2-x_3^2)-\frac{1}{2}x_2(x_1^2-x_0^2)$ & $\sqrt{3}$\\[0.6ex] 
$x_3(x_0^2-x_2^2)-\frac{1}{2}x_3(x_0^2-x_1^2)$ & $\sqrt{3}$\\[0.6ex]
\hline

\rule[-2mm]{0mm}{0.65cm}
Basis for $\|x\|^2\cH_1^3$ & $\norm{\cdot}_{L^2}$\\[0.6ex] 
\hline
\rule{0mm}{0.4cm}
 $x_0(x_0^2+x_1^2+x_2^2+x_3^2)$ & $4\sqrt{3}$  \\[0.6ex]
 $x_1(x_0^2+x_1^2+x_2^2+x_3^2)$  & $4\sqrt{3}$\\[0.6ex] 
$x_2(x_0^2+x_1^2+x_2^2+x_3^2)$ & $4\sqrt{3}$\\[0.6ex] 
 $x_3(x_0^2+x_1^2+x_2^2+x_3^2)$ & $4\sqrt{3}$\\[0.6ex]
\hline 
\end{tabular}\label{table}
\end{center}
\end{table}
Let us fix bases $\{H_{3, i}\}_{i\in J_3=\{1,\dots ,16\}}$ for $\cH_3^3$ and $\{\|x\|^2 H_{1, i}\}_{i\in J_1=\{1,\dots ,4\}}$ for $\|x\|^2 \cH_1^3$ which are orthonormal with respect to the $L^2$-scalar product, and then we have that $\{\frac{1}{\sqrt{\mu_1}}H_{3, i}\}_{i\in J_3}\cup\{\frac{\|x\|^2}{\sqrt{\mu_2}}H_{1, i}\}_{i\in J_1}$ is an orthonormal basis with respect to our scalar product. Notice that since for our purposes we just need to classify scalar products up to constants, we can rescale our parameters such that they sum up to $1$ and obtain the following random polynomial
\be \label{eq:Plambda}
P_{\la}(x)=\la\left(\sum_{i\in J_3}\xi_{3, i}\cdot H_{3, i}(x)\right)+(1-\la)\left(\sum_{i\in J_1}\xi_{1, i}\cdot \|x\|^2H_{1, i}(x)\right)
\ee
where $\xi_{j,i}\sim N(0,1)$ are independent standard gaussians. In Theorem \ref{thm:main} we will use the explicit orthogonal basis for $\cP$ shown in table \ref{table}.

\begin{remark}
Notice that we will take into account also the limit cases $\la = 0$ and $\la = 1$ of pure harmonics of degree $1$ and $3$ respectively.
\end{remark}

\begin{corollary}\label{cor:kostlan}
The Kostlan distribution \eqref{eq:kostlan} corresponds to the choice $\lambda=\frac{1}{3}$ in \eqref{eq:invariant}.
\end{corollary}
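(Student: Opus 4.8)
The plan is to identify the Kostlan distribution inside the one–parameter family $\{P_\la\}$ by comparing covariance kernels. Since the Kostlan polynomial \eqref{eq:kostlan} is a centered nondegenerate gaussian on $\cP$ and (as recalled below) has an $O(4)$-invariant covariance kernel, the classification of invariant distributions recalled in Section \ref{harmonics} forces it to coincide, after an overall rescaling of the polynomial — which changes neither $Z(P)$ nor the number of lines on it — with $P_\la$ for exactly one $\la\in[0,1]$. So the whole content is to pin down that $\la$.

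First I would record the covariance kernel of the Kostlan polynomial: by the multinomial theorem and independence of the $\xi_\alpha$,
\[
\mathbb{E}\bigl[P(x)P(y)\bigr]=\sum_{|\alpha|=3}\frac{3!}{\alpha_0!\cdots\alpha_3!}\,x^\alpha y^\alpha=\langle x,y\rangle^3,
\]
which is invariant under the diagonal action of $O(4)$ because $\langle gx,gy\rangle=\langle x,y\rangle$. (This also reproves that the Kostlan distribution is invariant.) By Schur's Lemma, as used in Section \ref{harmonics}, an invariant centered gaussian on $\cP=\cH_3^3\oplus\|x\|^2\cH_1^3$ is determined up to scale by the single ratio of the variances it carries on the two irreducible summands, and in the $P_\la$ model that ratio is $\la^2/(1-\la)^2$; so it suffices to compute this ratio for the kernel $\langle x,y\rangle^3$.

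To do so I would specialize one argument to $e_0=(1,0,0,0)$. On one side $\langle x,e_0\rangle^3=x_0^3$, and using the harmonic cubic $x_0^3-x_0(x_1^2+x_2^2+x_3^2)\in\cH_3^3$ of Table \ref{table} together with $\|x\|^2x_0=x_0^3+x_0(x_1^2+x_2^2+x_3^2)$ one gets the decomposition along \eqref{eq:deco}
\[
x_0^3=\tfrac12\bigl(x_0^3-x_0(x_1^2+x_2^2+x_3^2)\bigr)+\tfrac12\,\|x\|^2x_0 .
\]
On the other side, if $\Pi_3,\Pi_1$ denote the $L^2$-reproducing kernels of $\cH_3^3$ and $\cH_1^3$, then the covariance kernel of $c\,P_\la$ specialized at $e_0$ is $c^2\la^2\,\Pi_3(x,e_0)+c^2(1-\la)^2\,\|x\|^2\,\Pi_1(x,e_0)$. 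Every basis vector of $\cH_3^3$ in Table \ref{table} other than $x_0^3-x_0(x_1^2+x_2^2+x_3^2)$ vanishes at $e_0$, and the only basis vector of $\cH_1^3$ not vanishing at $e_0$ is $x_0$; with the $L^2$-norms listed there, namely $\|x_0^3-x_0(x_1^2+x_2^2+x_3^2)\|_{L^2}^2=12$ and $\|x_0\|_{L^2}^2=48$, this gives $\Pi_3(x,e_0)=\tfrac1{12}\bigl(x_0^3-x_0(x_1^2+x_2^2+x_3^2)\bigr)$ and $\Pi_1(x,e_0)=\tfrac1{48}\,x_0$. Matching the two expressions for the kernel component by component along \eqref{eq:deco} yields $c^2\la^2/12=1/2$ and $c^2(1-\la)^2/48=1/2$; dividing these eliminates $c$ and leaves $\la^2/(1-\la)^2=12/48=1/4$, hence $\la=\tfrac13$.

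There is no real obstacle, only bookkeeping that is easy to get wrong: one must remember that the $H_{3,j},H_{1,j}$ in \eqref{eq:invariant} are the $L^2$-orthonormal vectors, i.e. the Table \ref{table} entries divided by their norms rather than the entries themselves, and one must carry the harmless constant $c$, present precisely because the Kostlan weights $\sqrt{3!/\alpha!}$ do not make the resulting field's coefficients of unit scale. Equivalently — and perhaps more concretely for the write-up — one can bypass the reproducing kernels altogether and match the variances of the coefficients of $x_0x_1x_2$ and of $x_0^3$ in the two models, which gives $c^2\la^2=6$ and $c^2\bigl(\la^2/12+(1-\la)^2/48\bigr)=1$, and hence again $\la=\tfrac13$.
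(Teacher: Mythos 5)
Your proposal is correct, and its main route is genuinely different from the paper's, while your closing remark is essentially the paper's own proof. The paper argues in two lines by comparing Kostlan and $L^2$ norms of one element in each irreducible summand: $x_0x_1x_2$ gives $\mu_1=\tfrac{1}{6}$, $\|x\|^2x_0$ gives $\mu_2=\tfrac{1}{24}$, and then the weights $\tfrac{1}{\sqrt{\mu_1}},\tfrac{1}{\sqrt{\mu_2}}$ are rescaled to sum to $1$, yielding $\la=\tfrac13$ — this is exactly your ``match the variances of the coefficients of $x_0x_1x_2$ and $x_0^3$'' variant, and equivalently your ratio $\la^2/(1-\la)^2=\mu_2/\mu_1=\tfrac14$. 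Your primary argument instead computes the covariance kernel $\mathbb{E}[P(x)P(y)]=\langle x,y\rangle^3$, decomposes its zonal specialization $x_0^3=\tfrac12\bigl(x_0^3-x_0(x_1^2+x_2^2+x_3^2)\bigr)+\tfrac12\|x\|^2x_0$ along $\cH_3^3\oplus\|x\|^2\cH_1^3$, and matches it against the reproducing kernels $\Pi_3(\cdot,e_0)$ and $\Pi_1(\cdot,e_0)$ read off from Table \ref{table}; the bookkeeping there is correct (only $x_0^3-x_0(x_1^2+x_2^2+x_3^2)$ and $x_0$ survive at $e_0$, with squared norms $12$ and $48$, giving $c^2\la^2=6$, $c^2(1-\la)^2=24$). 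What the kernel route buys is a basis-free reproof of the $O(4)$-invariance of the Kostlan ensemble together with the closed form $\langle x,y\rangle^3$, at the cost of slightly more machinery than the paper's direct norm comparison. Both arguments rest on the same structural input (Schur's Lemma forcing the invariant product to be $\mu_1(\cdot,\cdot)_2\oplus\mu_2(\cdot,\cdot)_2$), and since only the ratio $\mu_2/\mu_1$ enters, the overall normalization implicit in the $L^2$ norms of Table \ref{table} — which you correctly carry through the constant $c$ — is harmless in either version.
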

\begin{proof}
Take the element $x_0x_1x_2$ of the basis. Its $L^2$ norm is 1, while its Kostlan norm is $\frac{1}{\sqrt{6}}$, therefore we get that $\mu_1=\frac{1}{6}$.
Consider now the element $x_0(x_0^2+x_1^2+x_2^2+x_3^2)$. Its $L^2$ norm is $4\sqrt{3}$, while its Kostlan norm is $\sqrt{2}$, therefore $\mu_2=\frac{1}{24}$. We look for $\alpha\in\R$ such that $\alpha\frac{1}{\sqrt{\mu_1}}+\alpha\frac{1}{\sqrt{\mu_2}}=1$, i.e., $\alpha=\frac{1}{3\sqrt{6}}$. So in the end $\lambda=\alpha\frac{1}{\sqrt{\mu_1}}=\frac{1}{3}$.
\end{proof}

\subsection{Vector bundles and the Kac--Rice formula}

In this section we recall the construction from \cite[Theorem $1$]{BLLP}.
Let $Gr^+(2,4)$ denote the Grassmannian of oriented $2$--planes in $\R^4$, that we identify with its image in $\mathbb{S}^5$ under the spherical Pl\"ucker embedding. It can be seen as the set of simple, norm--one vectors in the second exterior power of $\R^4$. Denote by $g$ the Riemannian metric induced by this embedding.  Let $\hbox{Sym}^3(\tau^*_{2,4})$ be the $3^{rd}$ symmetric power of the dual of the tautological bundle on $Gr^+(2,4)$. For every $f\in \R[x_0,\ldots ,x_3]_{(3)}$, we define a section $\sigma_f$ of the bundle $\hbox{Sym}^3(\tau^*_{2,4})$ by considering $\sigma_f(W)=f|_W$, its restriction on $W\in Gr^+(2,4)$. In this way our main problem of finding the expected number of lines in the surface $Z(P_{\la})\subseteq \RP^3$ becomes computing
$$E_\lambda =\mathbb{E}\#\lbrace W\in Gr(2,4) \mid \sigma_{P_\lambda}(W)=0\rbrace$$
where $Gr(2,4)$ denotes the Grassmannian of $2$--planes in $\R^4$, whose double cover is given by $Gr^+(2,4)$.
We recall now the following theorem which is an essential tool for this computation.

\begin{theorem}[Kac--Rice formula \cite{adlerrandom}]
Let $(M,g)$ be a Riemannian manifold of dimension $m$ and $X:M\to \mathbb{R}^m$ be a smooth random map such that 
\begin{enumerate}[label=(\roman*)]
\item for every $t\in M$, the random vector $X(t)$ has a gaussian nondegenerate distribution;
\item the probability that $X$ has degenerate zeroes in $M$ is zero.
\end{enumerate}
Then, denoting by $p_{X(t)}$ the density function of $X(t)$, for every $U\subset M$ measurable set the expected number of zeroes of $X$ in $U$ is given by the formula:
\begin{equation}
\mathbb{E}\,\#(\{X=0\}\cap U)=\int_{U} \mathbb{E}\lbrace |{\det(\hat{J}X(t))}| \mid X(t)=0 \rbrace \, p_{X(t)}(0)\cdot w_U(t)
\end{equation}
where $w_U$ is the volume form induced by the Riemannian metric $g$ and $\hat{J}X(t)$ denotes the matrix of the derivatives of the components of $X$ with respect to an orthonormal frame field.
\end{theorem}

For $i=1,2$ and $j=3,4$, consider $E_{i,j}$ the matrix that has $1$ in position $(i,j)$, $-1$ in position $(j,i)$ and $0$ otherwise; then $e^{tE_{i,j}}\in O(4)$. Let $e_0,e_1,e_2,e_3$ be the standard basis vectors of $\R^4$, $t = (t_{1,3},t_{1,4},t_{2,3},t_{2,4})\in \R^{2 \times 2}$, and consider the function

\[
\xymatrix@R=0pt@C=1pc{
R \colon \mathbb{R}^{2 \times 2} \times \hbox{span}\{e_0,e_1\} \ar[r] & \mathbb{R}^4\\
{\hphantom{R\colon{}}} (t,y) \ar@{|->}[r] & (e^{\sum t_{i,j}E_{i,j}})\cdot y
}
.\]

Then $\phi \colon \mathbb{R}^{2\times 2} \to Gr^+(2,4)$ defined by $\phi (t)=R(t,e_0)\wedge R(t,e_1)$ is a local parametrization of $Gr^+(2,4)$ around $e_0\wedge e_1$. In fact this is the Riemannian exponential map centred at $e_0\wedge e_1$ (see \cite{Kozlov}).

Hence $\phi^{-1} \colon U \to \mathbb{R}^{2\times 2}$ is a coordinate chart on a neighborhood $U$ of $e_0\wedge e_1$, and we get a trivialization of the bundle $\hbox{Sym}^3(\tau ^*_{2,4})$ over $U$ as follows:
\[
\begin{tikzcd}
\hbox{Sym}^3(\tau^*_{2,4})|_U \arrow[dr, "\pi"] \arrow[rr, "h"] & & U\times \mathbb{R}[y_0,y_1]_{(3)} \arrow[ld, "p_1"]\\
&U&
\end{tikzcd} 
\]
where $h(f)=(W,f(R(\phi^{-1}(W),\cdot)))$ for every $f\in \hbox{Sym}^3(\tau^*_{2,4})|_W$.

\begin{remark}
Since $Gr^+(2,4)$ is compact and connected, and the map $\phi$ is a Riemannian exponential map, then $\phi$ is surjective. We can take $U$ to be the largest domain for which $\phi$ is a diffeomorphism: then $Gr^+(2,4)\setminus U$ is the cut locus at $e_0\wedge e_1$ (see \cite[Theorem III.2.2]{chavel2006riemannian}) and it has measure $0$. So integrating over $U$ is equivalent to integrating over $Gr^+(2,4)$.
\end{remark}
 
Take the polynomial $P_\lambda$ in \eqref{eq:Plambda} and define
\[
\Tilde{\sigma}_{P_\lambda} \colon U \to \mathbb{R}[y_0,y_1]_{(3)} \simeq \R^4
\]
in such a way that $h(\sigma_{P_\lambda}(W))=(W,\Tilde{ \sigma}_{P_\lambda}(W))$.
So we can apply the Kac--Rice formula to $X=\Tilde{\sigma}_{P_\lambda}\circ \phi \colon \phi^{-1}(U) \to \R^4$:
 \begin{align}
 \mathbb{E}\#\{\Tilde{\sigma}_{P_\lambda}=0\}=\mathbb{E}\#\{X=0\}&=\int_{\phi^{-1}(U)} \mathbb{E}\{|\det(\hat{J}X(t))| \mid X(t)=0\}p_{X(t)}(0)\cdot \phi^*w_{Gr^+(2,4)}(t)\\
&= \int_U \mathbb{E}\{|\det(J(W))| \mid \Tilde{\sigma}_{P_\lambda}(W)=0\}p(0,W)\cdot w_{Gr^+(2,4)}(W)\\
&= \int_{Gr^+(2,4)} \mathbb{E}\{|\det(J(W))| \mid \Tilde{\sigma}_{P_\lambda}(W)=0\}p(0,W)\cdot w_{Gr^+(2,4)}(W),
\end{align}
 where here $\phi^{-1}(U)$ is endowed with the pull--back metric $\phi^*g$, $p(0,W)$ denotes the density at zero of $\Tilde{\sigma}_{P_\lambda}(W)$ and $J(W)$ is the matrix of the derivatives at $W$ of the components of $\Tilde{\sigma}_{P_\lambda}$ with respect to an orthonormal frame field, that we will simply call Jacobian matrix.
 
The fact that the distribution of $P_\lambda$ is $O(4)$--invariant implies that the function 
\[
C(W)\coloneqq \mathbb{E}\{|\det(J(W))| \mid \Tilde{\sigma}_{P_\lambda}(W)=0\}p(0,W)
\]
is a constant $C$ which does not depend on $W$. Indeed, let $W_1$ and $W_2$ be two elements of $Gr^+(2,4)$, and let $k\in O(4)$ be such that $k(W_1)=W_2$. Then, by the Kac--Rice formula, we have 
\begin{align*}
    C(W_1)&=\lim_{\epsilon\rightarrow+\infty}\frac{1}{\vol(B(W_1,\epsilon))}\int_{B(W_1,\epsilon)}\mathbb{E}\{|\det(J(W))| \mid \Tilde{\sigma}_{P_\lambda}(W)=0\}p(0,W)\cdot w_{Gr^+(2,4)}(W)\\
    &=\lim_{\epsilon\rightarrow+\infty}\frac{1}{\vol(B(W_1,\epsilon))}\mathbb{E}\#(\{\Tilde{\sigma}_{P_\lambda}(W)=0\}\cap B(W_1,\epsilon))\\
    &=\lim_{\epsilon\rightarrow+\infty}\frac{1}{\vol(B(W_1,\epsilon))}\mathbb{E}\#(\{\Tilde{\sigma}_{P_\lambda}\circ k^{-1}(W)=0\}\cap k(B(W_1,\epsilon)))\\
    &=\lim_{\epsilon\rightarrow+\infty}\frac{1}{\vol(B(W_2,\epsilon))}\mathbb{E}\#(\{\Tilde{\sigma}_{P_\lambda}(W)=0\}\cap B(W_2,\epsilon))\\
    &=C(W_2)
\end{align*}
denoting by $B(W_i,\epsilon)$ the ball around $W_i$ of radius $\epsilon$. Therefore the expected number of zeros of the section is
\[
\mathbb{E}\#\{\Tilde{\sigma}_{P_\lambda}=0\}=C\cdot \vol(Gr^+(2,4))
\]
where $\vol(Gr^+(2,4))$ is the volume of $Gr^+(2,4)$.
Moreover we will show in the proof of Theorem \ref{thm:main} that $\Tilde{\sigma}_{P_\lambda}(W)$ and $J(W)$ are independent random variables (for a certain $W$), and in that case
\[
\mathbb{E}\{|\det(J(W))| \mid \Tilde{\sigma}_{P_\lambda}(W)=0\}= \mathbb{E}\{|\det(J(W))|\}.
\]
Because $Gr^+(2,4)$ is a double covering of $Gr(2,4)$, in the end we get that
\begin{align}
E_\lambda &=\mathbb{E}\#\{ W\in Gr(2,4) \mid \sigma_{P_\lambda}(W)=0\} \notag \\
&=\frac{1}{2}\,\mathbb{E}\#\{ W\in Gr^+(2,4) \mid \sigma_{P_\lambda}(W)=0\} \notag \\
&=\frac{1}{2}\,C\cdot \vol(Gr^+(2,4)) \notag \\
&=\mathbb{E}\{|\det(J(W_0))|\}\cdot \vol(Gr(2,4))\cdot p(0,W_0)\label{Elambda-KR}
\end{align}
for a fixed $W_0\in Gr^+(2,4)$.
 
Let us now focus on the Jacobian matrix: write the polynomial $P_\lambda$ in the monomial basis as 
\[
P_\lambda= \sum_{|i|=3} \beta_{i_0,i_1,i_2,i_3}y_0^{i_0}y_1^{i_1}y_2^{i_2}y_3^{i_3}
\]
and choose $W_0=e_0\wedge e_1$; since $W_0=\phi(0)$ then
\[
\Tilde{\sigma}_{P_\lambda}(W_0)=\sigma_{P_\lambda}(W_0)=\sum_{|i|=3} \beta_{i_0,i_1,0,0}y_0^{i_0}y_1^{i_1}.
\]
As in the proof of \cite[Theorem 2]{BLLP} we can compute the matrix $J(W_0)$ that turns out to be: 
\[J(W_0)=\begin{bmatrix}
\beta_{2,0,1,0} & 0 & \beta_{2,0,0,1} & 0 \\
\beta_{1,1,1,0} & \beta_{2,0,1,0} & \beta_{1,1,0,1} & \beta_{2,0,0,1}\\
\beta_{0,2,1,0} & \beta_{1,1,1,0} & \beta_{0,2,0,1} & \beta_{1,1,0,1} \\
0 & \beta_{0,2,1,0} & 0 & \beta_{0,2,0,1}
\end{bmatrix}
\]
This matrix will be used in the proof of the main theorem.
\section{Proof of Theorem \ref{thm:main}}

\begin{proof}

Fix the orthogonal basis $\{\Tilde{H}_{3,j}\}_{j\in\{1,\dots ,16\}} \cup \{\Tilde{H}_{1,j}\}_{j\in\{1,\dots ,4\}}$ introduced in Table \ref{table} for the space $W_{3,3}$. Then our random polynomial is
\begin{multline}
P_{\la}(x)=\la\left(\sum_{j=1}^4\xi_{3, j}\cdot \Tilde{H}_{3, j}(x) + \sum_{j=5}^8\xi_{3, j}\cdot \frac{\Tilde{H}_{3, j}(x)}{2\sqrt{3}} + \sum_{j=9}^{12}\xi_{3, j}\cdot \frac{\Tilde{H}_{3, j}(x)}{2} + \sum_{j=13}^{16}\xi_{3, j}\cdot \frac{\Tilde{H}_{3, j}(x)}{\sqrt{3}} \right)\\
+(1-\la)\left(\sum_{j=1}^4\xi_{1, j}\cdot \|x\|^2 \frac{\Tilde{H}_{1, j}(x)}{4\sqrt{3}}\right).
\end{multline}
Expanding this harmonic basis in the monomial one we can compute directly the Jacobian as above and we obtain the expression:

\[J(W_0)=\begin{bmatrix}
\bar{x}- \bar{y} & 0 & \bar{x'} - \bar{y'} & 0 \\
\bar{z} & \bar{x}- \bar{y} & \bar{z'} & \bar{x'} - \bar{y'}\\
\bar{x} + \bar{y} & \bar{z} & \bar{x'} + \bar{y'} & \bar{z'} \\
0 & \bar{x} + \bar{y} & 0 & \bar{x'} + \bar{y'}
\end{bmatrix}
\]
where these new gaussians
\begin{align*}
&\bar{x} = -\frac{\la}{2\sqrt{3}}\xi_{3,7} + \frac{\la}{2\sqrt{3}}\xi_{3,15} + \frac{(1-\la)}{4\sqrt{3}}\xi_{1,3} &\quad &\sim N\left(0,\sqrt{\frac{\la^2}{6} + \frac{(1-\la)^2}{48}}\right)\\
&\bar{x'} = -\frac{\la}{2\sqrt{3}}\xi_{3,8} + \frac{\la}{2\sqrt{3}}\xi_{3,16} + \frac{(1-\la)}{4\sqrt{3}}\xi_{1,4} &\quad &\sim N\left(0,\sqrt{\frac{\la^2}{6} + \frac{(1-\la)^2}{48}}\right)\\
&\bar{y} = \frac{\la}{2}\xi_{3,11} &\quad &\sim N\left( 0, \frac{\la}{2} \right)\\
&\bar{y'} =- \frac{\la}{2}\xi_{3,12} &\quad &\sim N\left( 0, \frac{\la}{2} \right)\\
&\bar{z} = \la \, \xi_{3,1} &\quad &\sim N\left( 0, \la \right)\\
&\bar{z'} = \la \,\xi_{3,2} &\quad &\sim N\left( 0, \la \right)
\end{align*} 
are again independent.
On the other hand, when we compute $\Tilde{\sigma}_{P_\lambda}(W_0)$ the only basis elements that do not vanish are $H_{3,5}, H_{3,6}, H_{3,9}, H_{3,13}, H_{3,14}, H_{1,1}, H_{1,2}$ so this section and $J(W_0)$ are independent.
Therefore, thanks to equation \eqref{Elambda-KR}, we are left with
\[
E_\la = \mathbb{E}\{|\det(J(W_0))|\}\cdot \vol(Gr(2,4))\cdot p(0,W_0).
\]

Let us compute $\mathbb{E}\{|\det(J(W_0))|\}$. We will use the following notation: if $\bar{t} \sim N(0,\eta )$ we will call $t= \frac{1}{\eta} \bar{t} \sim N(0,1)$. It turns out after some computations that
$$
\det(J(W_0)) = \left( \frac{\la^2}{6} + \frac{(1-\la)^2}{48} \right)\la^2 \alpha^2 - \frac{\la^4}{4} \beta^2 + \left( \frac{\la^2}{6} + \frac{(1-\la)^2}{48} \right)\la^2 \gamma^2
$$
where $\alpha=xy'-x'y$, $\beta=y'z - yz'$, $\gamma=x'z-xz'$ are quadratic forms in gaussians.

Instead of parametrizing the scalar products with $(\la,1-\la)$ we can use other rescaled parameters $(M,N)$ such that $\frac{M^2}{6}+\frac{N^2}{48}=1$. Fix $(\la,1-\la)$ and $(M,N)$ parametrizing the same distribution: there exists $\mu:[0,1]\to{(0,\infty)}$ such that $\mu(\lambda) P_{\la}(x) = \check{P}_{M,N}$ as explained in section \ref{harmonics}, where
\[
\check{P}_{M,N} = M \left(\sum_{j\in J_3}\xi_{3, j}\cdot H_{3, j}(x)\right)+N \left(\sum_{j\in J_1}\xi_{1, j}\cdot \|x\|^2H_{1, j}(x)\right)
\]
and $\mu(\lambda) = \frac{4\sqrt{3}}{\sqrt{(1-\la)^2 + 8\la^2}}$.
Hence we can do again the same reasoning using the $(M,N)$ parameters, and we can compute for this polynomial the function
\[
\check{E}_{M,N} = \mathbb{E}\{|\det(J(W_0))|\}\cdot \vol(Gr(2,4))\cdot p(0,W_0).
\]
To write then the expectation as a function of the $\lambda$ parameter we need to remember that $E_{\la} = \check{E}_{\la\,\mu(\lambda), (1-\la)\mu(\lambda)}$, as the zero set does not change under multiplication of a polynomial by a constant.

With these new parameters the determinant becomes much simpler:
$$
\det(J(W_0)) = M^2\left(\alpha^2 - \frac{M^2}{4}\beta^2 + \gamma^2\right).
$$
In order to compute the expectation of $|\det(J(W_0))|$ we need the joint density $\rho(\alpha,\beta,\gamma)$. Surprisingly it can be recovered by the method of characteristic functions and using Theorem $2.1$ of \cite{Scar}, as explained in \cite{BLLP}, so that denoting by $| \cdot |$ the Euclidean norm:
$$
\rho(\alpha,\beta,\gamma) = \frac{1}{4\pi} \frac{e^{-|(\alpha,\beta,\gamma)|}}{|(\alpha,\beta,\gamma)|}.
$$
Therefore we can compute the expectation of $|\det(J(W_0))|$ as:
\begin{align}
\mathbb{E}\{|\det(J(W_0))|\} &= \frac{M^2}{4\pi} \int_{\R^3} \left| \alpha^2 - \frac{M^2}{4}\beta^2 + \gamma^2 \right| \frac{e^{-\sqrt{\alpha^2 + \beta^2 + \gamma^2}}}{\sqrt{\alpha^2 + \beta^2 + \gamma^2}} d\alpha d\beta d\gamma\\
&= \frac{M^2}{4\pi} \int_{\R}\int_0^{2\pi} \int_0^{\infty} \rho \left| \rho^2 - \frac{M^2}{4}\beta^2 \right| \frac{e^{-\sqrt{\rho^2 + \beta^2}}}{\sqrt{\rho^2 + \beta^2}} d\rho d\phi d\beta\\
&= \frac{M^2}{2} \int_{\R} \int_0^{\infty} \rho \left| \rho^2 - \frac{M^2}{4}\beta^2 \right| \frac{e^{-\sqrt{\rho^2 + \beta^2}}}{\sqrt{\rho^2 + \beta^2}} d\rho d\beta\\
&= \frac{M^2}{2} \int_{-\frac{\pi}{2}}^{\frac{\pi}{2}} \int_0^{\infty} r^3 \cos{\theta} \left| \cos^2{\theta} - \frac{M^2}{4}\sin^2{\theta} \right| e^{-r} dr d\theta\\
&= 3M^2 \int_{-\frac{\pi}{2}}^{\frac{\pi}{2}} \cos{\theta} \left| \cos^2{\theta} - \frac{M^2}{4}\sin^2{\theta} \right| d\theta\\
&= 6M^2 \left( \frac{M^2}{12} - \frac{2}{3} + \frac{4}{3}\sqrt{\frac{4}{4+M^2}} \right)
\end{align}
where we used two changes of variables:
$$\begin{cases}
\alpha = \rho \cos\phi\\
\beta = \beta\\
\gamma = \rho \sin\phi
\end{cases}
\qquad  
\begin{cases}
\rho = r \cos\theta\\
\beta = r \sin\theta
\end{cases}$$
and then solved the integral in the $\theta$ variable finding explicitly the intervals of positivity and negativity of the function in the absolute value.

We have to compute now the density of $\Tilde{\sigma}_{P_\lambda}(W_0)$ at $0$. It is a gaussian random vector with zero mean and covariance
\[
\Sigma=
\begin{bmatrix}
\frac{M^2}{12}+\frac{N^2}{48} & 0 & -\frac{M^2}{12}+\frac{N^2}{48} & 0 \\
0 & \frac{5M^2}{12}+\frac{N^2}{48} & 0 & -\frac{M^2}{12}+\frac{N^2}{48}\\
-\frac{M^2}{12}+\frac{N^2}{48} & 0 &  \frac{5M^2}{12}+\frac{N^2}{48} & 0 \\
0 & -\frac{M^2}{12}+\frac{N^2}{48} & 0 & \frac{M^2}{12}+\frac{N^2}{48}
\end{bmatrix}
\]
that can be computed by looking at the coefficients of $P_\lambda$ in the monomials $x_0^3, x_0^2x_1, x_0x_1^2, x_1^3$.
This implies that
\[
p(0,W_0) = \frac{1}{4\pi^2 \sqrt{\det \Sigma}} = \frac{3}{2\pi^2 \left(4M^2 - \frac{M^4}{2}\right)}
\]
where we simplified the expression of the determinant using the relation $\frac{M^2}{6}+\frac{N^2}{48}=1$.
Finally the volume of the Grassmannian \cite[Remark $2$]{BLLP} is $\vol(Gr(2,4))=2\pi^2$, therefore we have that
\begin{equation}
\check{E}_{M,N} = \frac{12}{8-M^2}\left( \frac{M^2}{4} - 2 + 4\sqrt{\frac{4}{4+M^2}} \right).
\end{equation}
Observe that $M=\mu(\lambda)\lambda=\frac{4\sqrt{3}}{\sqrt{(1-\la)^2 + 8\la^2} }\la$, so we can come back to the original parameter $\la$ and obtain that
\be E_\la=\frac{9(8\la^2+(1-\la)^2)}{2\la^2+(1-\la)^2}\left(\frac{2\la^2}{8\la^2+(1-\la)^2}-\frac{1}{3}+\frac{2}{3}\sqrt{\frac{8\la^2+(1-\la)^2}{20\la^2+(1-\la)^2}}\right).\ee

\end{proof}

\subsection{Properties of the function \texorpdfstring{$E_{\lambda}$}{}}

\begin{proposition}
The function $E_{\la}$ is monotone increasing.
\end{proposition}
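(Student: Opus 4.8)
The plan is to rewrite $E_\la$ in a form where monotonicity becomes the positivity of an explicit rational function. Set $a=8\la^2+(1-\la)^2$, $b=2\la^2+(1-\la)^2$ and $c=20\la^2+(1-\la)^2$; these are strictly positive on $[0,1]$, so $E_\la$ is continuous there. Distributing the factor $\frac{9a}{b}$ over the parenthesis in Theorem~\ref{thm:main} and using the identities $c=a+12\la^2$, $b=a-6\la^2$ (and hence $18\la^2-3a=-3b$), the expression collapses to
\be
E_\la=-3+\frac{6\,a^{3/2}}{b\,\sqrt c}.
\ee
Consequently $E_\la$ is monotone increasing on $[0,1]$ if and only if $\phi(\la):=a^3/(b^2c)$ is.

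Next I would pass to a variable that makes $\phi$ rational and free of the common homogeneity. For $\la\in[0,1)$ put $t:=\la/(1-\la)\in[0,\infty)$, a strictly increasing function of $\la$; then $a=(1-\la)^2(8t^2+1)$, $b=(1-\la)^2(2t^2+1)$, $c=(1-\la)^2(20t^2+1)$, so every power of $(1-\la)$ cancels in $\phi$ and, with $w:=t^2\ge 0$,
\be
\phi(\la)=\psi(w),\qquad \psi(w)=\frac{(8w+1)^3}{(2w+1)^2(20w+1)}.
\ee
Since $w=t^2$ is increasing in $t$, hence in $\la$, it is enough to prove that $\psi$ is increasing on $[0,\infty)$.

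For this I would use the logarithmic derivative:
\be
\frac{\psi'(w)}{\psi(w)}=\frac{24}{8w+1}-\frac{4}{2w+1}-\frac{20}{20w+1}
=\frac{24(2w+1)(20w+1)-4(8w+1)(20w+1)-20(8w+1)(2w+1)}{(8w+1)(2w+1)(20w+1)}.
\ee
A one-line expansion shows that the quadratic and constant terms of the numerator cancel, leaving exactly $216\,w$; hence $\psi'(w)/\psi(w)=216w/\big((8w+1)(2w+1)(20w+1)\big)\ge 0$, with equality only at $w=0$. Thus $\psi$ is strictly increasing on $[0,\infty)$, so $\phi$ and $E_\la$ are strictly increasing on $[0,1)$, and by continuity $E_\la$ is monotone increasing on the closed interval $[0,1]$.

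There is no serious obstacle here: the only genuine choice is the substitution $t=\la/(1-\la)$ (equivalently, working with the variable $M^2$ of the proof of Theorem~\ref{thm:main}), after which the problem is purely algebraic, and the sole computation is the elementary polynomial identity $24(2w+1)(20w+1)-4(8w+1)(20w+1)-20(8w+1)(2w+1)=216w$. As an alternative one may start directly from the simplified expression $\check E_{M,N}=\frac{12}{8-M^2}\big(\frac{M^2}{4}-2+\frac{8}{\sqrt{4+M^2}}\big)$ obtained above: writing $u=M^2\in[0,6]$ (so that $8-u>0$), a short computation gives $\frac{d}{du}\check E_{M,N}=144\,u\,(8-u)^{-2}(4+u)^{-3/2}\ge 0$, and since $u=48\la^2/(8\la^2+(1-\la)^2)$ is increasing in $\la$, the conclusion follows in the same way.
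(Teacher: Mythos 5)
Your proof is correct and takes essentially the same route as the paper: both reduce $E_\la$ to $-3$ plus a manifestly positive expression (your $6a^{3/2}/(b\sqrt c)$ is exactly the paper's $96/\bigl((8-M^2)\sqrt{4+M^2}\bigr)$) and then check the sign of a derivative after a monotone change of variable. Your identity $24(2w+1)(20w+1)-4(8w+1)(20w+1)-20(8w+1)(2w+1)=216w$ is the same elementary computation as the paper's $g'(M)=-3M^3/\sqrt{4+M^2}$ in a different parametrization, and the alternative you sketch at the end in the variable $u=M^2$ is literally the paper's argument (with the added benefit that you verify, rather than merely assert, that $u$ is increasing in $\la$).
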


\pgfplotsset{every axis x label/.append style = {font = \relsize{-1} },every axis y label/.append style = {font = \relsize{-1}, rotate = -90, yshift = 5em, xshift = 0.5em} } 

\begin{figure}[ht]
\begin{center}
\begin{tikzpicture}
\begin{axis}[
domain=0:1,
xtick = {0,0.33,0.66,1},
xticklabels = {0, 1/3, 2/3, 1},
ytick  = {1,2,3,4,5,6,7,8,9,10,11,12,13},
xmin=0, xmax=1.10,
ymin=0, ymax=13.40,
width = 4cm, height=10cm,
grid,
]
\addplot[color=red,line width=1pt]
{(9*(8*x^2+(1-x)^2)/(2*x^2+(1-x)^2)*(2*x^2/(8*x^2+(1-x)^2)-1/3+2/3*sqrt((8*x^2+(1-x)^2)/(20*x^2+(1-x)^2)))};
\end{axis}
\end{tikzpicture}
\caption{A plot of the function $E_{\la}$.}\label{plot}
\end{center}
\end{figure}
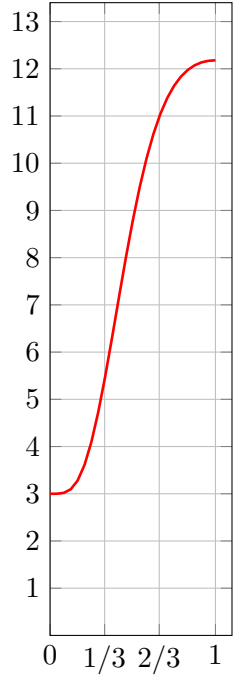

\begin{proof}
In order to simplify the computation and because $M$ is an increasing function of $\la$, we will prove the monotonicity of $E$ as a function of $M$ instead of $\la$.
\[
\check{E}_{M,N}=-3+\frac{96}{(8-M^2)(\sqrt{4+M^2})}.
\]
Then, it is enough to prove that the denominator $g(M)=(8-M^2)(\sqrt{4+M^2})$ is decreasing. In fact,
\begin{align}
 g'(M) &=-2M(\sqrt{4+M^2})+\frac{(8-M^2)M}{\sqrt{4+M^2}}\\
 &=\frac{-2M(4+M^2)+M(8-M^2)}{\sqrt{4+M^2}}\\
 &=\frac{-3M^3}{\sqrt{4+M^2}}.\\
\end{align}
So for positive values of $M$, which are the ones we are interested in, $g'(M)\leq 0$ and therefore $E_{\la}$ is increasing.

\end{proof}

The plot of the function $E_{\la}$ is shown in figure \ref{plot}. Its minimum is $E_{0}=3$ whereas the maximun is reached by the other limit case $\la=1$ and is $E_{1}=24\sqrt{\frac{2}{5}}-3\simeq12,179$, as stated in Corollary \ref{cor_maximum}. This value of $\la$ corresponds to purely harmonic polynomials of degree $3$.

Let us focus now on the minimum. The fact that it is $3$ may be proved also with another approach, that gives some information about the deterministic situation. 
We now define different discriminants and explain the relation between them. 

\begin{table}
\begin{center}
\begin{tabular}{|c|c|c|}
\hline
 & Topology of zero set & Number of lines \\ \hline 
\centering
1 & $\RP^2$\#$3T^2$ & 27  \\ \hline
\centering

2 & $\RP^2$\#$2T^2$ & 15\\ \hline

3 & $\RP^2$\#$T^2$ & 7 \\ \hline

4 & $\RP^2$ & 3\\ \hline

5 & $\RP^2\sqcup S^2$ & 3\\ \hline

\end{tabular}
\end{center}
\caption{We present here in a schematic way the connected components of $\RP^{19}\setminus\Delta^{\C}$ with the topology of the zero sets of the cubics in them and the number of lines on those cubics.}\label{tbl:toplin}
\end{table}

We call \textit{complex discriminant} the subset of those cubics in $\RP^{19}$ which have a complex singularity (their partial derivatives have a common complex zero) and we denote it with $\Delta^{\C}$. It is a known fact (see \cite{ACD}) 
that $\RP^{19}\setminus\Delta^{\C}$ has five connected components. If we fix a connected component among those five, all zero sets of  cubics in there contain the same number of lines and are all homotopy equivalent (see Table \ref{tbl:toplin}). We call \textit{real discriminant} the subset of cubics in $\Delta^{\C}$ such that at least one singularity is real and we will denote it by $\Delta^{\R}$.  Notice that every smooth cubic contains $27$ lines and therefore every element in $\RP^{19}\setminus\Delta^{\C}$ contains a finite number of lines.

In the next Proposition we will work in the space $W_{3,3}$ endowed with the $L^2$ norm. With some abuse of notation we denote by $\Delta^{\C} \subset W_{3,3}$ the set of those $g\in W_{3,3}$ whose projectivization is in $\Delta^{\C}$ or $g$ is the zero cubic. Same for $\Delta^{\R}$.

\begin{proposition}\label{prop_perturbH1}
There exists $\epsilon >0$ such that for all
\begin{equation}
    h_1 (x) = \| x \|^2 ( a_0 x_0 + a_1 x_1 + a_2 x_2 + a_3 x_3 ) \in \|x\|^2\mathcal{H}_1^3
\end{equation}
with $\norm{h_1}^2_{L^2} = 48 (a_0^2 + a_1^2 + a_2^2 + a_3^2) = 1$ and for all $g\in W_{3,3} \setminus \Delta^{\C}$ such that $\| g- h_1 \|_{L^2} \leq \epsilon$ the zero set of $g$ contains exactly $3$ lines.
\end{proposition}

\begin{proof}
First of all notice that if we find the $\epsilon$ of the claim for a fixed $\bar{h}_1$, then the same $\epsilon$ works for any other polynomial $h_1$ as in the statement. In fact $\exists R\in O(4)$ such that $h_1 (x) = \bar{h}_1 (Rx)$, where the $L_2$ norm of $h_1$ is again $1$. Moreover, due to the $O(4)$--invariance of the $L^2$--norm, that rotation $R$ takes the ball of radius $\epsilon$ around $\bar{h}_1$ into the ball of radius $\epsilon$ around $h_1$, without changing the geometry of the zero sets of the cubics in there.
So we are left to prove the claim for a fixed $h_1$.
The cubic $h_1$ belongs to $\Delta^{\C}\setminus\Delta^{\R}$, and its zero set is topologically $\RP^{2}$. Thanks to Thom's isotopy lemma \cite{thomhomologie} $\exists\epsilon>0$ such that for $g\in W_{3,3}$ with $\norm{g-h_1}_{L^2}\leq\epsilon$ the zero sets $Z(g)$ and $Z(h_1)$ are ambient isotopic, and hence homeomorphic. This means that if $g$ is a cubic as above and $g\not\in \Delta^{\C}$, then the projectivization of $g$ belongs to the connected component of $\RP^{19}$ whose zero set is topologically $\RP^2$ and contains exactly $3$ lines.
\end{proof}

\begin{remark}\label{rmk_norm_a}
If $\|h_1\|_{L^2}\neq 1$, the claim above remains true but in a slightly different neighborhood: for all $g\in W_{3,3} \setminus \Delta^{\C}$ such that $\| g- h_1 \|_{L^2} \leq \epsilon\|h_1\|_{L^2}$, the zero set of $g$ contains exactly $3$ lines.
\end{remark}

In view of Proposition \ref{prop_perturbH1}, we can deduce that
\begin{equation}
    \lim_{\lambda \to 0} E_{\lambda} = 3
\end{equation}
without knowing the explicit formula for $E_\lambda$. In fact given any polynomial $f\in W_{3,3}$, thanks to the harmonic decomposition, we can always write $f=h_3+h_1$ where $h_3\in\mathcal{H}_3^3$ and $h_1\in\norm{x}^2\mathcal{H}_1^3$. Taking the $\epsilon$ of the proposition above, we have \begin{equation}
    \begin{split}
        E_{\lambda} &= \mathbb{E}\#\lbrace \text{lines on } Z(f) \rbrace \\
        &= \frac{1}{K} \int_{\|x\|^2\mathcal{H}_1^3} \int_{\mathcal{H}_3^3} \#\lbrace \text{lines on } Z(f) \rbrace\: e^{-\left( \frac{\|h_3\|_{L^2}^2}{2\lambda^2} + \frac{\|h_1\|_{L^2}^2}{2(1-\lambda)^2} \right)} \frac{1}{\lambda^{16}(1-\lambda)^{4}} dh_3 \, dh_1 \\
        &= \frac{1}{K} \int_{S^3}\Theta(\theta)\,\int_0^{\infty} \left[ \int_{ \left\lbrace \|h_3\|_{L^2} \leq \rho\, \epsilon \right\rbrace} \cdots \; \, dh_3 + \int_{ \left\lbrace \|h_3\|_{L^2} > \rho\, \epsilon \right\rbrace} \cdots \; \, dh_3 \right]\cdot \rho^3 \, d\rho \, d\theta\\
    \end{split}
\end{equation}
where $K$ is a normalization constant, $\Theta(\theta)$ a function deriving from the spherical change of coordinates, $S^3 = \{ h_1 \in \|x\|^2\mathcal{H}_1^3 \: \text{such that} \: \norm{h_1}_{L^2}=1 \}$, $\rho =\norm{h_1}_{L^2}$. The number of lines on $Z(f)$ in the first summand is exactly $3$ for the generic $f$ because we are in the nice neighborhood of Proposition \ref{prop_perturbH1}. Therefore $E_{\lambda} = 3\cdot\mathbb{P}(h_3+h_1\in B(h_1,\epsilon\norm{h_1}_{L^2}))+ I(\lambda)$ where $I(\lambda)$ is the following non--negative integral
\begin{equation}
    \begin{split}
        & \quad\frac{1}{K} \int_{S^3}\Theta(\theta)\int_0^{\infty}  \int_{ \left\lbrace \|h_3\|_{L^2} > \rho\, \epsilon \right\rbrace} \#\lbrace \text{lines on } Z(f) \rbrace\: e^{-\left( \frac{\|h_3\|_{L^2}^2}{2\lambda^2} + \frac{\rho^2}{2(1-\lambda)^2} \right)}\frac{ \rho^3}{\lambda^{16}(1-\lambda)^{4}} dh_3\,d\rho \, d\theta \\
        &\leq \frac{27}{K} \int_{S^3} \Theta(\theta) \, d\theta \int_0^{\infty}  \int_{ \left\lbrace \|h_3\|_{L^2} > \rho\, \epsilon \right\rbrace} e^{-\left( \frac{\|h_3\|_{L^2}^2}{2\lambda^2} + \frac{\rho^2}{2(1-\lambda)^2} \right)}\frac{ \rho^3}{\lambda^{16}(1-\lambda)^{4}} dh_3\,d\rho \\
        &\leq K' \int_0^{\infty}  \int_{ \left\lbrace \lambda\|\hat{h}_3\|_{L^2} > \rho\, \epsilon \right\rbrace} e^{-\left( \frac{\|\hat{h}_3\|_{L^2}^2}{2} + \frac{\rho^2}{2(1-\lambda)^2} \right)}\frac{\rho^3}{(1-\lambda)^{4}} d\hat{h}_3\,d\rho \\
    \end{split}
\end{equation}
for $K'$ a new constant that englobes all the others, and $\hat{h}_3=\frac{h_3}{\lambda}$. Using dominated convergence  it is now easy to see that $\lim_{\lambda\to 0} I(\lambda) =0$. Indeed the last inequality of the explicit computation also proves $\mathbb{P}(h_3+h_1\not\in B(h_1,\epsilon\norm{h_1}))\to 0$, which implies $\mathbb{P}(h_3+h_1\in B(h_1,\epsilon\norm{h_1}))\to 1$ and therefore 
\begin{equation}
    \lim_{\lambda\to 0} E_{\lambda} = 3.
\end{equation}

\subsection{Generalization}
More in general let us consider $f\in \R[x_0, \dots , x_n]_{(d)}$ and the associated zero locus $Z(f)\subset \RP^n$. The same polynomial $f$ defines a section $\sigma_f$ of the vector bundle
\[
\begin{tikzcd}
\hbox{Sym}^d(\tau^*_{2,n+1}) \arrow[d, "\pi"]\\
G(2,n+1)
\end{tikzcd} 
\]
such that $\sigma_f(W)=f\vert_W$. The set $\{\sigma_f =0\}$ corresponds to the lines contained in $Z(f)$ and it is generically $0$--dimensional if and only if $d=2n-3$. So it makes sense to ask for the number of lines inside a hypersurface of degree $2n-3$ in $\RP^n$. If for the case of cubics in $\RP^3$ it was at least known that the maximum number of complex lines, 27, could be reached also in the real case, when moving to this more general setting it is not clear whether the generic number of complex lines can be realized or not by real lines.
Following the same procedure explained in Section \ref{harmonics} we may wonder
\begin{center}
\emph{"What is the expected number of real lines inside a random invariant hypersurface of degree $2n-3$ in $\RP^n$?"}
\end{center}
The idea is that the expectation might be maximized again by purely harmonic polynomials of top degree, and so the possible way of constructing hypersurfaces with many lines could be sampling random pure harmonics of degree $2n-3$.

In \cite{BLLP} the authors have proved that
\[
\lim_{n\to \infty} \frac{ \log E_n^{\textrm{Kostlan}} }{\log C_n}=\frac{1}{2}
\]
where $E_n^{\textrm{Kostlan}}$ is the expected number of real lines inside a random invariant hypersurface $Z(f)$ of degree $2n-3$ in $\RP^n$ sampled from the Kostlan distribution, and $C_n$ is the number of complex lines on a generic hypersurface of degree $2n−3$ in $W_{3,3}^n$. This led A. Lerario to a conjecture: sampling random pure harmonics of degree $2n-3$ instead of Kostlan, the intuition is that
\[
\lim_{n\to \infty} \frac{ \log E_n^{\textrm{Harmonic}} }{\log C_n}>\frac{1}{2}
\]
(or maybe in a wonderful universe the limit could also be equal to $1$). 

\begin{remark}
Such results would be relevant also because they may give some information also about the deterministic case.
In dimension $n>3$, it is not even clear if there exist real hypersurfaces containing $C_n$ real lines, but for sure there must exist hypersurfaces with at least $\lceil E_n \rceil$ real lines.
Random results thus give a bound that may not be known yet.
\end{remark}

\bibliographystyle{spmpsci}
\bibliography{literature}

\begin{thebibliography}{10}
\providecommand{\url}[1]{{#1}}
\providecommand{\urlprefix}{URL }
\expandafter\ifx\csname urlstyle\endcsname\relax
  \providecommand{\doi}[1]{DOI~\discretionary{}{}{}#1}\else
  \providecommand{\doi}{DOI~\discretionary{}{}{}\begingroup
  \urlstyle{rm}\Url}\fi

\bibitem{adlerrandom}
Adler, R.J., Taylor, J.E.: Random fields and geometry.
\newblock Springer Science \& Business Media (2009)

\bibitem{manssour2020probabilistic}
Ait El~Manssour, R., Lerario, A.: Probabilistic enumerative geometry over
  $p$-adic numbers: linear spaces on complete intersections.
\newblock arXiv:2011.07558  (2020)

\bibitem{ACD}
Allcock, D., Carlson, J.A., Toledo, D.: Hyperbolic geometry and moduli of real
  cubic surfaces.
\newblock Ann. Sci. \'{E}c. Norm. Sup\'{e}r. (4) \textbf{43}(1), 69--115
  (2010).
\newblock \doi{10.24033/asens.2116}.
\newblock \urlprefix\url{https://doi.org/10.24033/asens.2116}

\bibitem{BLLP}
Basu, S., Lerario, A., Lundberg, E., Peterson, C.: Random fields and the
  enumerative geometry of lines on real and complex hypersurfaces.
\newblock Math. Ann. \textbf{374}(3-4), 1773--1810 (2019).
\newblock \doi{10.1007/s00208-019-01837-0}.
\newblock \urlprefix\url{https://doi.org/10.1007/s00208-019-01837-0}

\bibitem{Beauville}
Beauville, A.: {Determinantal hypersurfaces.}
\newblock Michigan Mathematical Journal \textbf{48}(1), 39 -- 64 (2000).
\newblock \doi{10.1307/mmj/1030132707}.
\newblock \urlprefix\url{https://doi.org/10.1307/mmj/1030132707}

\bibitem{Buckley}
Buckley, A., Ko\v{s}ir, T.: Determinantal representations of smooth cubic
  surfaces.
\newblock Geom. Dedicata \textbf{125}, 115--140 (2007).
\newblock \doi{10.1007/s10711-007-9144-x}.
\newblock \urlprefix\url{https://doi.org/10.1007/s10711-007-9144-x}

\bibitem{PSC}
B{\"u}rgisser, P., Lerario, A.: Probabilistic schubert calculus.
\newblock Journal f{\"u}r die reine und angewandte Mathematik (Crelles Journal)
  \textbf{2020}(760), 1--58 (2020).
\newblock \urlprefix\url{https://doi.org/10.1515/crelle-2018-0009}

\bibitem{chavel2006riemannian}
Chavel, I.: Riemannian Geometry: A Modern Introduction, 2 edn.
\newblock Cambridge Studies in Advanced Mathematics. Cambridge University Press
  (2006).
\newblock \doi{10.1017/CBO9780511616822}

\bibitem{Antonio}
Diatta, D.N., Lerario, A.: Low-degree approximation of random polynomials.
\newblock Foundations of Computational Mathematics  (2021).
\newblock \doi{10.1007/s10208-021-09506-y}.
\newblock \urlprefix\url{https://doi.org/10.1007/s10208-021-09506-y}

\bibitem{EdelmanKostlan95}
Edelman, A., Kostlan, E.: How many zeros of a random polynomial are real?
\newblock Bull. Amer. Math. Soc. (N.S.) \textbf{32}(1), 1--37 (1995).
\newblock \doi{10.1090/S0273-0979-1995-00571-9}.
\newblock \urlprefix\url{http://dx.doi.org/10.1090/S0273-0979-1995-00571-9}

\bibitem{finkh}
Finashin, S., Kharlamov, V.: Abundance of real lines on real projective
  hypersurfaces.
\newblock International Mathematics Research Notices \textbf{2013}(16),
  3639--3646 (2013)

\bibitem{LerarioFLL}
Fyodorov, Y.V., Lerario, A., Lundberg, E.: On the number of connected
  components of random algebraic hypersurfaces.
\newblock J. Geom. Phys. \textbf{95}, 1--20 (2015).
\newblock \doi{10.1016/j.geomphys.2015.04.006}.
\newblock \urlprefix\url{http://dx.doi.org/10.1016/j.geomphys.2015.04.006}

\bibitem{GaWe1}
Gayet, D., Welschinger, J.Y.: Lower estimates for the expected {B}etti numbers
  of random real hypersurfaces.
\newblock J. Lond. Math. Soc. \textbf{90}, 105--120 (2014)

\bibitem{GaWe3}
Gayet, D., Welschinger, J.Y.: Expected topology of random real algebraic
  submanifolds.
\newblock J. Inst. Math. Jussieu \textbf{14}(4), 673--702 (2015).
\newblock \doi{10.1017/S1474748014000115}.
\newblock \urlprefix\url{http://dx.doi.org/10.1017/S1474748014000115}

\bibitem{GaWe2}
Gayet, D., Welschinger, J.Y.: Betti numbers of random real hypersurfaces and
  determinants of random symmetric matrices.
\newblock J. Eur. Math. Soc. (JEMS) \textbf{18}(4), 733--772 (2016).
\newblock \doi{10.4171/JEMS/601}.
\newblock \urlprefix\url{http://dx.doi.org/10.4171/JEMS/601}

\bibitem{Scar}
I., S.: Quadratic forms in normal variables.
\newblock Thesis (M.Sc.)--McGill University  (1973)

\bibitem{kac43}
Kac, M.: On the average number of real roots of a random algebraic equation.
\newblock Bull. Amer. Math. Soc. \textbf{49}, 314--320 (1943).
\newblock \doi{10.1090/S0002-9904-1943-07912-8}.
\newblock \urlprefix\url{http://dx.doi.org/10.1090/S0002-9904-1943-07912-8}

\bibitem{KassWickelgren}
Kass, J.L., Wickelgren, K.: An arithmetic count of the lines on a smooth cubic
  surface.
\newblock arXiv:1708.01175  (2017)

\bibitem{kostlan:93}
Kostlan, E.: On the distribution of roots of random polynomials.
\newblock In: From {T}opology to {C}omputation: {P}roceedings of the
  {S}malefest ({B}erkeley, {CA}, 1990), pp. 419--431. Springer, New York (1993)

\bibitem{Kozhasov}
Kozhasov, K.: On fully real eigenconfigurations of tensors.
\newblock SIAM Journal on Applied Algebra and Geometry \textbf{2}(2), 339--347
  (2018).
\newblock \urlprefix\url{https://doi.org/10.1137/17M1145902}

\bibitem{Kozlov}
Kozlov, S.E.: Geometry of real {G}rassmannian manifolds. {I}, {II}, {III}.
\newblock Zap. Nauchn. Sem. S.-Peterburg. Otdel. Mat. Inst. Steklov. (POMI)
  \textbf{246}(Geom. i Topol. 2), 84--107, 108--129, 197--198 (1997)

\bibitem{Letwo}
Lerario, A.: Random matrices and the average topology of the intersection of
  two quadrics.
\newblock Proc. Amer. Math. Soc. \textbf{143}(8), 3239--3251 (2015).
\newblock \doi{10.1090/proc/12324}.
\newblock \urlprefix\url{http://dx.doi.org/10.1090/proc/12324}

\bibitem{Lerarioshsp}
Lerario, A., Lundberg, E.: Statistics on hilbert's 16th problem.
\newblock International Mathematics Research Notices \textbf{2015}(12),
  4293--4321 (2015)

\bibitem{LeLu:gap}
Lerario, A., Lundberg, E.: Gap probabilities and {B}etti numbers of a random
  intersection of quadrics.
\newblock Discrete Comput. Geom. \textbf{55}(2), 462--496 (2016).
\newblock \doi{10.1007/s00454-015-9741-7}.
\newblock \urlprefix\url{http://dx.doi.org/10.1007/s00454-015-9741-7}

\bibitem{Lerariolemniscate}
Lerario, A., Lundberg, E.: On the geometry of random lemniscates.
\newblock Proc. Lond. Math. Soc. (3) \textbf{113}(5), 649--673 (2016).
\newblock \doi{10.1112/plms/pdw039}.
\newblock \urlprefix\url{http://dx.doi.org/10.1112/plms/pdw039}

\bibitem{LerarioStecconi}
Lerario, A., Stecconi, M.: Differential topology of gaussian random fields.
\newblock arXiv:1902.03805  (2019)

\bibitem{NazarovSodin1}
Nazarov, F., Sodin, M.: On the number of nodal domains of random spherical
  harmonics.
\newblock Amer. J. Math. \textbf{131}(5), 1337--1357 (2009).
\newblock \doi{10.1353/ajm.0.0070}.
\newblock \urlprefix\url{http://dx.doi.org/10.1353/ajm.0.0070}

\bibitem{NazarovSodin2}
Nazarov, F., Sodin, M.: Asymptotic laws for the spatial distribution and the
  number of connected components of zero sets of {G}aussian random functions.
\newblock Zh. Mat. Fiz. Anal. Geom. \textbf{12}(3), 205--278 (2016).
\newblock \doi{10.15407/mag12.03.205}.
\newblock \urlprefix\url{http://dx.doi.org/10.15407/mag12.03.205}

\bibitem{OkTel}
Okonek, C., Teleman, A.: Intrinsic signs and lower bounds in real algebraic
  geometry.
\newblock J. Reine Angew. Math. \textbf{688}, 219--241 (2014).
\newblock \doi{10.1515/crelle-2012-0055}.
\newblock \urlprefix\url{http://dx.doi.org/10.1515/crelle-2012-0055}

\bibitem{S27Q}
Ranestad, K., Sturmfels, B.: Twenty-seven questions about the cubic surface.
\newblock Le Mathematiche \textbf{75}(2), 411--424 (2020)

\bibitem{Sarnak}
Sarnak, P.: Letter to {B}. {G}ross and {J}. {H}arris on ovals of random planes
  curve.
\newblock available at \url{http://publications.ias.edu/sarnak/section/515}
  (2011)

\bibitem{SarnakWigman}
Sarnak, P., Wigman, I.: Topologies of nodal sets of random band limited
  functions.
\newblock In: Advances in the theory of automorphic forms and their
  {$L$}-functions, \emph{Contemp. Math.}, vol. 664, pp. 351--365. Amer. Math.
  Soc., Providence, RI (2016).
\newblock \doi{10.1090/conm/664/13040}.
\newblock \urlprefix\url{http://dx.doi.org/10.1090/conm/664/13040}

\bibitem{Segre}
Segre, B.: The {N}on-singular {C}ubic {S}urfaces.
\newblock Oxford University Press, Oxford (1942)

\bibitem{Bez2}
Shub, M., Smale, S.: Complexity of {B}\'ezout's theorem {II}: volumes and
  probabilities.
\newblock In: F.~Eyssette, A.~Galligo (eds.) Computational Algebraic Geometry,
  \emph{Progress in Mathematics}, vol. 109, pp. 267--285. Birkh\"auser (1993)

\bibitem{Bez4}
Shub, M., Smale, S.: Complexity of {B}\'ezout's {T}heorem {IV}: probability of
  success; extensions.
\newblock SIAM J. of Numer. Anal. \textbf{33}, 128--148 (1996)

\bibitem{tao:2014}
Tao, T.: Hilbert's fifth problem and related topics, vol. 153.
\newblock American Mathematical Soc. (2014)

\bibitem{thomhomologie}
Thom, R.: Sur l’homologie des vari{\'e}t{\'e}s alg{\'e}briques r{\'e}elles.
\newblock Differential and combinatorial topology pp. 255--265 (1965)

\end{thebibliography}
\end{document}